\newtheorem{theorem}{Theorem}
\newtheorem*{theorem*}{Theorem}
\newtheorem{fact}[theorem]{Fact}
\newtheorem*{fact*}{Fact}
\newtheorem*{claim*}{Claim}
\newtheorem{proposition}[theorem]{Proposition}
\newtheorem*{proposition*}{Proposition}
\newtheorem{lemma}[theorem]{Lemma}
\newtheorem*{lemma*}{Lemma}
\newtheorem*{question*}{Question}
\newtheorem*{conjecture*}{Conjecture}
\newtheorem{corollary}[theorem]{Corollary}
\newtheorem*{corollary*}{Corollary}
\theoremstyle{definition}
\newtheorem{definition}[theorem]{Definition}
\newtheorem*{definition*}{Definition}
\theoremstyle{remark}
\newtheorem{remark}[theorem]{Remark}
\newtheorem*{remark*}{Remark}
\newtheorem{example}[theorem]{Example}
\newtheorem*{example*}{Example}
\numberwithin{theorem}{section}
\numberwithin{claim}{section}
\numberwithin{equation}{section}
\newcommand{\bbZ}{\mathbb{Z}}
\newcommand{\cL} {\mathcal{L}}
\newcommand{\cF} {\mathcal{F}}
\newcommand{\bbR}{\mathbb{R}}
\title{A note on uniform definability of types over finite sets in partial orders of finite width}
\author{Timo Krisam\thanks{Supported by a Minerva Fellowship of the Minerva Stiftung Gesellschaft fuer die
Forschung mbH} \ 
and\
Ori Segel}
\begin{document}
\maketitle

\begin{abstract}
In \textit{VC density in some theories without the independence property} the authors asked whether any partial order of finite width has the VC1 property (i.e. every formula in one variable has UDTFS in one parameter). We give a negative answer and some related remarks.
\end{abstract}

\section{Introduction and Preliminaries}

We give a brief introduction to UDTFS and the VCd property. 
Our presentation follows \cite{aschenbrennervc}.\\
For this section, we fix a first order language $\cL$, an $\cL$-structure $M$.
$\Delta(x;y)$ will denote a set of partitioned $\cL$-formulas, where both $x$ and $y$ are tuples of finite length.
$S^\Delta(B)$ denotes the set of $\Delta$-types over a set $B \subseteq M^{|y|}$.

\begin{definition}
    Let $q \in S^\Delta(B)$ for some $\Delta(x;y)$. 
    A family of $\cL (M)$-formulas $\cF = (\phi_\#(y))_{\phi \in \Delta}$ \textit{defines} $q$ if for all $\phi \in \Delta$ and $b \in B$ we have that \[
     \phi(x;b) \in q\Leftrightarrow M \vDash \phi_\#(b).
    \]
 \end{definition}

 \begin{definition}[UDTFS]
     $\Delta(x;y)$ has \textit{uniform definability of types over finite sets (UDTFS) (in d parameters)} in $M$, if there are finitely many families $\cF_i = (\phi_i(y;y_1,...,y_d))_{\phi \in \Delta}$ with $|y| = |y_j|$, $1 \leq j \leq d$ and $i \leq n$ for some $n < \omega$, such that for any finite $B \subseteq M^{|y|}$ and $q \in S^\Delta(B)$ there are $b_1,...,b_d \in B$ and $i \leq n$ such that $\cF_i(y;b_1,...,b_d)$ defines $q$.\\
     If $\Delta = \{\phi\}$, we also say that $\phi$ has UDTFS (in $d$ parameters).
 \end{definition}

 \begin{definition}[VCd]
     Let $d < \omega$. We say that $M$ has the \textit{VCd property}, if every $\Delta(x;y)$ with $|x| = 1$ has UDTFS in $d$ parameters.\\
     If $T$ is a theory, we say that $T$ has the VCd property, if every $M \vDash T$ does.
 \end{definition}

The authors go on to show that weakly quasi o-minimal theories have the VC1 property.
To do that, they use the following tools:

\begin{fact}[\cite{aschenbrennervc}, Lemma 5.5]\label{bool}
    Let $\Delta(x;y)$ and $\Delta'(x;y)$ be finite. Suppose that every formula in $\Delta$ is equivalent to a boolean combination of formulas in $\Delta'$.\\
    Then, if $\Delta'$ has UDTFS in $d$ parameters, so does $\Delta$.
\end{fact}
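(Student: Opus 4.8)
The plan is to use that, for any finite $B$, a complete $\Delta$-type over $B$ is determined --- through the given Boolean combinations --- by the corresponding complete $\Delta'$-type over the \emph{same} set $B$, and then to build defining families for $\Delta$ by substituting the $\Delta'$-defining families into those Boolean combinations, incurring no growth in the number of parameters.

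Concretely, the first step is: for each $\phi(x;y)\in\Delta$ fix a Boolean combination $\beta_\phi$ and formulas $\psi^\phi_1(x;y),\dots,\psi^\phi_{k_\phi}(x;y)\in\Delta'$ with
\[
  M\vDash\forall x\,\forall y\,\bigl(\phi(x;y)\leftrightarrow\beta_\phi(\psi^\phi_1(x;y),\dots,\psi^\phi_{k_\phi}(x;y))\bigr);
\]
this equivalence persists in every elementary extension of $M$, and substituting a single parameter tuple $b$ for $y$ on both sides leaves $b$ as the only parameter --- so passing from $\phi$ to its Boolean expansion introduces no new parameters. Second, let $\cF'_1,\dots,\cF'_n$ witness UDTFS for $\Delta'$ in $d$ parameters, writing $\widehat{\psi}_i(y;y_1,\dots,y_d)$ for the formula that $\cF'_i$ assigns to $\psi\in\Delta'$. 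For each $i\le n$ define a family $\cF_i$ for $\Delta$ that assigns to $\phi\in\Delta$ the formula $\widehat{\phi}_i:=\beta_\phi(\widehat{\psi^\phi_1}_i,\dots,\widehat{\psi^\phi_{k_\phi}}_i)$, i.e.\ the same Boolean combination applied to the corresponding $\Delta'$-defining formulas. The third step is to verify that the finitely many families $\cF_1,\dots,\cF_n$ witness UDTFS for $\Delta$: given finite $B\subseteq M^{|y|}$ and $q\in S^\Delta(B)$, realize $q$ by some $a$ in an elementary extension $N\succeq M$, set $q':=\operatorname{tp}_{\Delta'}(a/B)\in S^{\Delta'}(B)$, and pick $b_1,\dots,b_d\in B$ and $i\le n$ with $\cF'_i(y;b_1,\dots,b_d)$ defining $q'$. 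Then for every $\phi\in\Delta$ and $b\in B$,
\[
  \phi(x;b)\in q \iff N\vDash\beta_\phi\bigl(\psi^\phi_1(a;b),\dots,\psi^\phi_{k_\phi}(a;b)\bigr) \iff M\vDash\widehat{\phi}_i(b;b_1,\dots,b_d),
\]
the first equivalence being the specialized Boolean equivalence in $N$, and the second holding because $\psi^\phi_j(x;b)\in q'$ iff $M\vDash\widehat{\psi^\phi_j}_i(b;b_1,\dots,b_d)$ for each $j$ and because evaluating $\beta_\phi$ commutes with these equivalences. Hence $\cF_i(y;b_1,\dots,b_d)$ defines $q$.

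I expect all of this to be routine bookkeeping; the one point deserving care is exactly that forming a Boolean combination commutes both with ``belonging to a complete type'' and with substitution of parameters --- in particular that the Boolean expansion of $\phi(x;b)$ uses no parameters beyond $b$, which is what allows the \emph{same} $d$ parameters that work for $\Delta'$ to work for $\Delta$ --- together with the standard fact that a complete $\Delta$-type over $B$ is realized in some elementary extension of $M$ (if the convention in play allows incomplete $\Delta$-types, first extend $q$ to a complete $\Delta$-type over $B$, which affects nothing).
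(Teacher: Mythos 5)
The paper states this as a quoted Fact (citing Lemma 5.5 of the reference) and gives no proof of its own, so there is nothing internal to compare against; your argument is the standard substitution proof of that lemma and is correct. The two points that need care --- that the Boolean expansion of $\phi(x;b)$ introduces no parameters beyond $b$, and that one passes from $q$ to a $\Delta'$-type $q'$ of a realization of $q$ in an elementary extension so that the defining family for $q'$ can be pushed through $\beta_\phi$ --- are both handled properly.
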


\begin{definition}
    Let $X$ be a set and $\cF$ a collection of subsets of $X$.
    Suppose there is $d>0$ such that any non-empty intersection $\bigcap\limits_{i = 1}^n F_i$ for $F_i \in \cF$ and $n > d$ is equal to an intersection of $d$ of the $F_i$.
    Then we define the \textit{breadth} of $\cF$ to be the smallest integer $d$ such that $\cF$ has this property.
\end{definition}

\begin{fact}[\cite{aschenbrennervc}, Lemma 5.2]\label{breadth}
    Let $\Delta(x;y)$ be finite and suppose that $\{ \phi(M^{|x|}; b) \mid b \in M^{|y|} \}$ has breadth $d$. Then $\Delta$ has UDTFS in $d$ parameters.
\end{fact}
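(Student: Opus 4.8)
The plan is to realise a given $\Delta$-type inside an elementary extension, use the breadth bound to pin that type down by finitely many of its ``positive'' instances, and then read off a bounded list of defining formulas. Throughout, fix $\Delta(x;y)$ and let $d$ be the breadth of the collection $\{\phi(M^{|x|};b)\mid\phi\in\Delta,\ b\in M^{|y|}\}$. I would first record that having breadth $\le d$ transfers to elementary extensions, and trivially to subcollections: for each fixed $n>d$ the assertion ``every nonempty intersection of $n$ of these sets equals an intersection of some $d$ of them'' is a single first-order sentence — a conjunction, over the finitely many tuples $(\phi_1,\dots,\phi_n)\in\Delta^n$, of a formula universally quantified over the parameters — so it holds in every $N\succeq M$. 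Hence for any $N\succeq M$ and any $B\subseteq M^{|y|}$ the collection $\{\phi(N^{|x|};b)\mid\phi\in\Delta,\ b\in B\}$ still has breadth $\le d$.

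Next, given finite $B\subseteq M^{|y|}$ and $q\in S^\Delta(B)$, use compactness to pick $N\succeq M$ and $a\in N^{|x|}$ realising $q$, and set $P=\{(\phi,b)\in\Delta\times B\mid N\vDash\phi(a;b)\}$. Then $a$ lies in $I:=\bigcap_{(\phi,b)\in P}\phi(N^{|x|};b)$, so $I\ne\emptyset$. If $P\ne\emptyset$, the breadth $\le d$ bound on $\{\phi(N^{|x|};b)\mid\phi\in\Delta,\ b\in B\}$ yields pairs $(\phi_1,b_1),\dots,(\phi_d,b_d)\in P$ — with repetitions allowed, to pad the list when $|P|<d$ — such that $I=\bigcap_{j=1}^d\phi_j(N^{|x|};b_j)$.

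The crucial observation is then that for every $\psi\in\Delta$ and $c\in B$,
\[
\psi(x;c)\in q\ \iff\ a\in\psi(N^{|x|};c)\ \iff\ \bigcap_{j=1}^d\phi_j(N^{|x|};b_j)\subseteq\psi(N^{|x|};c):
\]
the forward direction because $(\psi,c)\in P$ forces $I\subseteq\psi(N^{|x|};c)$, the reverse because $a\in\bigcap_{j=1}^d\phi_j(N^{|x|};b_j)$. The last condition is expressed by the $\cL$-formula $\forall x\big(\bigwedge_{j=1}^d\phi_j(x;y_j)\to\psi(x;y)\big)$ evaluated at $(c,b_1,\dots,b_d)$, and since $M\preceq N$ with all parameters in $M$ it may be checked in $M$. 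I would therefore take, for each tuple $\bar\phi=(\phi_1,\dots,\phi_d)\in\Delta^d$, the family $\cF_{\bar\phi}=\big(\forall x(\bigwedge_{j=1}^d\phi_j(x;y_j)\to\psi(x;y))\big)_{\psi\in\Delta}$; these are only $|\Delta|^d$ families, and the displayed equivalence shows that $\cF_{\bar\phi}(y;b_1,\dots,b_d)$ defines $q$ for the tuple $\bar\phi$ and the parameters $b_j\in B$ produced above.

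The only situation left out is $P=\emptyset$, i.e.\ the type containing $\neg\phi(x;b)$ for all $(\phi,b)\in\Delta\times B$; for this I would adjoin one more family in which every defining formula is identically false (for instance $\neg(y_1=y_1)$), which defines precisely such a type for any choice of parameters from a nonempty $B$. Altogether this gives UDTFS for $\Delta$ in $d$ parameters. The argument is essentially bookkeeping once the idea is in place; I expect the genuinely delicate points to be (i) the reduction to a realised type combined with the transfer of the breadth property to $N$, and (ii) the degenerate case $P=\emptyset$ together with the padding conventions when $|P|<d$, which must be dealt with without spending an extra parameter.
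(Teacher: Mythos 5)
Your proof is correct, and since the paper states this as an imported Fact (citing \cite{aschenbrennervc}, Lemma 5.2) without reproducing a proof, the relevant comparison is with the argument in that reference --- which yours matches: realize the type, use breadth to replace the positive instances by $d$ of them, and define the type by the formulas $\forall x\bigl(\bigwedge_{j}\phi_j(x;y_j)\to\psi(x;y)\bigr)$ ranging over the $|\Delta|^d$ choices of $\bar\phi$. Your handling of the two degenerate cases (padding by repetition when $0<|P|<d$, and an identically false family for the all-negative type) is the right bookkeeping; note also that since $\Delta$ and $B$ are finite, every consistent $\Delta$-type over $B$ is already realized in $M$, so the passage to $N\succeq M$ and the transfer of the breadth bound, while harmless and correct, could be dispensed with.
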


For the remainder of this note, we are interested in partial orders of finite width.

\begin{definition}
    A partial order $(M,<)$ has \textit{width} $n$ if every antichain in $M$ contains at most $n$ elements.
\end{definition}

The authors show that a partial order of finite width is interpretable in a weakly quasi o-minimal theory.
While this yields many interesting properties that we did not introduce here, VCd (in particular VC1) is not preserved under interpretation in general. Because of this, the authors asked the following question:

\begin{question*}
    Does every partial order of finite width have the VC1 property?
\end{question*}

\section{A counterexample of width 3}

\begin{example}
    Let $(M, <)$ be the structure with universe $\bbR \times \{0,1,2\}$ and
    \[M \vDash (x,i) < (y,j) \text{ if and only if } 
    \begin{cases}
    x <_{\bbR} y & i = j \\
    x + \frac{1}{2} <_{\bbR} y & i \neq j
    \end{cases}\]
    Clearly, $M$ is a partial order of width 3.
    \begin{claim*}
        $M$ does not have VC1. 
    \end{claim*}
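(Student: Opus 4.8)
\section*{Proof proposal}

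My plan is to fix the single atomic formula $\phi(x;y):=x<y$ with $|x|=|y|=1$ and show that $\{\phi\}$ fails to have UDTFS in one parameter; by the definition of $\mathrm{VC}1$ (applied with $\Delta=\{\phi\}$) this already gives $M\not\models\mathrm{VC}1$. So I assume for contradiction that there are $\cL$-formulas $\psi_1(y;z),\dots,\psi_n(y;z)$ witnessing UDTFS in one parameter for $\phi$, and I aim to produce a finite $B\subseteq M$ and a point $a\in M\setminus B$ such that the trace on $B$ of none of the sets $\psi_i(M;b_1)$, $b_1\in B$, equals $T_a:=\{b\in B : M\models a<b\}$. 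Since $\operatorname{tp}^\phi(a/B)\in S^\phi(B)$ must be defined by some $\psi_i(y;b_1)$, and ``$\psi_i(y;b_1)$ defines $\operatorname{tp}^\phi(a/B)$'' says precisely $T_a=\psi_i(M;b_1)\cap B$, this is a contradiction; as the family is arbitrary, we are done.

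The first ingredient is a description of one-variable sets definable over a single parameter $b_1=(c,k)$. I claim that for each fixed $\cL$-formula $\psi(y;z)$ there is $N\in\bbN$ such that $\psi(M;b_1)$ meets $\bbR\times\{k\}$ in a finite union of intervals and points with endpoints among $\{\,c+\tfrac m2 : m\in\bbZ,\ |m|\le N\,\}$, and meets $\bbR\times\{j\}$, for each of the two indices $j\ne k$, in one and the same such set (the two copies other than the $k$-th are interchanged by an automorphism of $M$ fixing $b_1$). The half-integer grid and the bound $N$ appear because, unlike in a pure coloured linear order, a formula can ``hop'' through the other copies along incomparability chains and thereby access positions $c\pm\tfrac m2$ out to a depth controlled by its quantifier rank; an analysis of one-variable formulas over a single parameter (reducing atoms to $y<z$, $z<y$, $y=z$ together with the relations ``$y$ is below $z-\tfrac m2$'' and ``$y$ is above $z+\tfrac m2$'' reachable in this way) makes this precise, and since $\psi_1,\dots,\psi_n$ is finite we may take one $N$ for all of them.

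Granting this, the mechanism of the example is that $\operatorname{tp}^\phi(a/B)$ for $a=(\rho,0)$ is the ``single cut'' type --- above $\rho$ in copy $0$ and above $\rho+\tfrac12$ in copies $1$ and $2$ --- so it treats copy $0$ differently from copies $1,2$, whereas $\psi_i(M;b_1)$ is governed by a single grid $c+\tfrac12\bbZ$ common to all three copies and looks the same on the two copies not containing $b_1$. Thus if $b_1$ lies in copy $1$ or $2$, the patterns of $\psi_i(M;b_1)$ on copy $0$ and on the remaining copy are literally the same union of grid intervals, yet they would have to display the (essentially unique) up-step at $\rho$ on the copy-$0$ part of $B$ and at $\rho+\tfrac12$ on the other; carrying an up-step at $\rho$ on both copies is spurious on the non-$0$ one, so placing in each copy a pair of $B$-points tightly straddling $\rho$ forces a mismatch there. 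If instead $b_1=(c,0)$ lies in $a$'s own copy, matching the copy-$0$ part forces $c+\tfrac{m_0}2$ into the small $B$-gap around $\rho$ for some $|m_0|\le N$, hence $c$ into a finite, explicitly computable set near $\rho$; for each such $c$ one checks that the induced grid $c+\tfrac12\bbZ$ on copies $1,2$ cannot reproduce the single up-step at $\rho+\tfrac12$, by placing the copy-$1$ and copy-$2$ points of $B$ on a scale incommensurable with the gaps of the copy-$0$ part so that some straddling pair is separated incorrectly. Concretely one takes $a=(0,0)$ and a $B$ with a few points in each copy: in copy $0$ a couple of points within $\eta$ of $0$ on each side plus a couple far away, and in copies $1$ and $2$ a couple of points within $\eta$ of $\tfrac12$ on each side plus a couple far away, with all offsets in general position relative to the finitely many points of the form $(B\text{-coordinate})+\tfrac m2$ with $|m|\le N$.

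I expect the main obstacle, and most of the work, to be precisely this last step: pinning down the structure of one-variable definable sets --- in particular the bounded-reach phenomenon, which is what makes $M$ genuinely more complicated than three disjoint copies of $(\bbR,<)$ --- and then choosing the offsets in $B$ so that every one of the finitely many allowed grid patterns is excluded for every admissible parameter. The ``same copy as $a$'' case is the delicate one, since there the naive ``the half-shift is invisible'' argument fails and one has to play the incommensurable spacing against the half-integer grid; the other-copy case is cleaner, being essentially the remark that a single parameter cannot service two copies whose required up-steps differ by exactly $\tfrac12$.
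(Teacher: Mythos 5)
Your overall strategy can be made to work, but it is a genuinely different and much heavier route than the paper's, and it rests on one substantial unproven ingredient. The step you correctly identify as the main obstacle --- the ``grid lemma'' asserting that $\psi(M;(c,k))$ meets each copy in a finite union of intervals and points with endpoints among $c+\tfrac m2$, $|m|\le N$, with $N$ controlled by the formula, and meets the two copies other than $k$ in the same set --- is only asserted. The symmetry half is immediate (the transposition of the two copies $\ne k$ is an automorphism fixing $(c,k)$ pointwise), but the half-integer-grid half genuinely requires an Ehrenfeucht--Fra\"iss\'e or quantifier-elimination analysis: the automorphisms fixing $(c,k)$ preserve the ``level'' $m$ of a point, so invariance under them gives a union of grid cells but not the finiteness of that union or the bound $N$. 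Until that lemma is proved, your same-copy case has no proof. (Once you have it, that case does close, and more easily than your ``incommensurability'' phrasing suggests: the relevant grid is generated by the finitely many copy-$0$ coordinates of $B$, so it suffices to place the copy-$1$,$2$ points of $B$ straddling $\rho+\tfrac12$ inside a gap of that finite grid, where every admissible $\psi_i(M;(c,0))$ is constant.)

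The paper avoids all of this with one design choice you missed: it puts exactly \emph{one} point of $B$ in copy $0$ and floods copies $1$ and $2$ with a fine grid of order $n$ many points. Your other-copy argument (which is essentially the paper's automorphism argument) then forces the parameter to be that single copy-$0$ point, so the $n$ formulas with that one admissible parameter can define at most $n$ types over $B$; but varying $a$ inside copy $0$ realizes more than $n$ distinct $\phi$-types over $B$ because of the many copy-$1$,$2$ points. So the hard case --- parameter in the same copy as $a$ --- is eliminated by counting rather than by any structure theory of definable sets, and no analysis of the $\psi_i$ beyond automorphism-invariance is ever needed. I would recommend either adopting that shortcut or writing out the grid lemma in full; as it stands the proposal has a real gap at its technical core.
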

    \begin{proof}
        Let $\phi(x;y) \equiv y < x$. Suppose there are $n < \omega$ and $(\psi_i(y; y'))_{i < n}$ witnessing VC1 for $\phi$.\\
        Consider the set $B = (\frac{\bbZ}{4n} \cap [0,1]) \times \{1,2\} \cup \{(\frac{1}{2},0)\}$.
        Let $a \in (\frac{1}{2}, 1) \times \{0\}$ and let $b \in B$ such that $\psi_i(y;b)$ defines 
       $p = tp_\phi(a/B)$ for some $i < n$. \\
       Then $\pi_2(b) = 0$. 
       Indeed, suppose we have $\pi_2(b) = i \neq 0$.\\
       Consider the automorphism 
       \[\sigma((x,j)) = 
       \begin{cases}
           (x,j) & i = j\\
           (x,0) & i + j = 3\\
           (x, 3-j) & j = 0
       \end{cases},\] 
       i.e the automorphism exchanging the other copies of $\bbR$.\\
       Then $p \vdash x > (\frac{1}{2}, 0)$ and $\sigma(p) \vdash \neg x > (\frac{1}{2}, 0)$ by definition of $p$ (note that $(\frac{1}{2}, 0) \in \sigma(B)$ by construction).
       But $\sigma(b) = b$, contradicting the assumption that $b$ defines $p$.

       As there is only one $b \in B$ with $\pi_2(b) = 0$, the $\psi_i$'s can define at most $n$ distinct types over $B$.
       But by choice of $B$, there are at least $n+1$ distinct $\phi$-types over $B$, contradicting our assumption.
    \end{proof}
\end{example}

\begin{remark}
    Clearly, we can produce orders of width $n$ without VC1 for $n \geq 3$ by using an analogue of the above construction on $\bbR \times \{0,...,n-1\}$.
\end{remark}

\begin{question*} This example raises multiple followup questions:
    \begin{itemize}
        \item Does every partial order of width 2 have the VC1 property?
        \item Let $n < \omega$. Is there some $d < \omega$ such that any partial order of width $n$ has VCd? 
        \item If not, is there an order of finite width without any of the VCd properties?
    \end{itemize}
\end{question*}
\begin{remark}
    Note that by Remark 3.7, even if the answer to the second question is positive, d must depend on n.
\end{remark}

Assuming quantifier elimination, we can answer the second question positively.

\begin{remark}
    Suppose $(M,<)$ has width $n$ and $\Delta(x;y)$ is a set of quantifier-free formulas with $|x| = 1$. 
    Then $\Delta$ has UDTFS in $n$ parameters.
\end{remark}
\begin{proof}
    By \ref{bool} (as $\Delta$ is finite), it is enough to show that the set $\{\psi(x;z) \equiv x = z, \phi(x;y) \equiv x < z\}$ has UDTFS in $n$ parameters.\\
    To do this, we show that the set $S = \{\phi(M,b) \mid b \in M\}$ has breadth $n$. 
    This is enough by \ref{breadth}, noting that $\psi(M,b) = \{b\}$ for any $b \in M$.\\
    Let $b_0,...,b_n \in M$. By assumption, there are $i \neq j$ such that $b_i > b_j$.
    But then $\phi(M;b_i) \cap \phi(M;b_j) = \phi(M; b_j)$, so in particular 
    $\bigcap\limits_{k \leq n} \phi(M; b_k) = \bigcap\limits_{k \leq n, k \neq i} \phi(M;b_k)$.
    This shows that $S$ has breadth $n$.
\end{proof}

\section{Finite structures}

In this section, we study the VCd property in finite partial orders.
In particular, we give an optimal upper bound on $d$ depending only on the width of the order.

\begin{lemma}
Let $M$ a structure in some language $\cL$ and $B\subseteq M^k$ some
set. Let also $x$ a single variable, $y$ a variable of size $k$, $\psi\left(x\right)$ an $\cL$-formula with $m$ paramaters from $B$ such that $\left|\psi\left(M\right)\right|\leq2^{d+1}-1$,
$\varphi\left(x,y\right)$ some formula.

Then for any $c\in\psi\left(M\right)$, $\text{tp}_{\varphi}\left(c/B\right)$
is definable with at most $m+d$ parameters, all taken from $B$.
\end{lemma}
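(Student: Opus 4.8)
The plan is to induct on $d$, with $M$, $B$, $\psi$, $\varphi$, $c$ universally quantified in the induction hypothesis. Fix $c\in\psi(M)$ and put $N:=|\psi(M)|\le 2^{d+1}-1$. For $b\in B$ write $S_b:=\{c'\in\psi(M):M\models\varphi(c',b)\leftrightarrow\varphi(c,b)\}$ for the set of elements of $\psi(M)$ that agree with $c$ on $\varphi(\,\cdot\,,b)$; note $c\in S_b$, so $|S_b|\ge 1$. The whole argument splits on the dichotomy: either (i) some $b_0\in B$ has $|S_{b_0}|\le 2^d-1$, or (ii) $|S_b|\ge 2^d$ for every $b\in B$.

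In case (i) (which forces $d\ge 1$), let $\psi'(x)$ be whichever of $\psi(x)\wedge\varphi(x,b_0)$ and $\psi(x)\wedge\neg\varphi(x,b_0)$ has $c$ in its solution set; then $\psi'$ has $m+1$ parameters from $B$, one checks $\psi'(M)=S_{b_0}$, so $c\in\psi'(M)$ and $|\psi'(M)|\le 2^d-1=2^{(d-1)+1}-1$. Applying the induction hypothesis to $\psi'$ with parameter budget $m+1$ and exponent $d-1$ produces a definition of $\text{tp}_\varphi(c/B)$ using $(m+1)+(d-1)=m+d$ parameters from $B$, as wanted.

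In case (ii) I claim $\text{tp}_\varphi(c/B)$ is defined outright by the formula $\theta(y)$ expressing ``at least $2^d$ elements $x$ satisfy $\psi(x)\wedge\varphi(x,y)$'', which carries only the $m$ parameters of $\psi$ (and $m\le m+d$). Indeed, for $b\in B$: if $M\models\varphi(c,b)$ then $S_b=\{c'\in\psi(M):\varphi(c',b)\}$, so this set has $\ge 2^d$ elements and $M\models\theta(b)$; while if $M\models\neg\varphi(c,b)$ then $\{c'\in\psi(M):\varphi(c',b)\}=\psi(M)\setminus S_b$ has at most $N-2^d\le(2^{d+1}-1)-2^d=2^d-1<2^d$ elements, so $M\models\neg\theta(b)$. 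The base case $d=0$ is subsumed here: then $|S_b|\ge 1=N$ always, case (ii) applies, and $\theta(y)$ is just $\exists x(\psi(x)\wedge\varphi(x,y))$, defining the $\varphi$-type of the unique element of $\psi(M)=\{c\}$.

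The only subtle point is case (ii): when no parameter splits $\psi(M)$ so as to isolate $c$ into a set of size below $2^d$, a naive ``binary search by restricting $\psi$'' stalls, since restricting along any $\varphi(\,\cdot\,,b)$ leaves more than $2^d-1$ elements on $c$'s side, and recursion is unavailable. The resolution is that precisely this obstruction pins $\varphi(c,\,\cdot\,)$ on $B$ down to a cardinality threshold on $\psi(M)$, and the hypothesis $N\le 2^{d+1}-1$ is exactly what makes the threshold $2^d$ cleanly separate the two outcomes. Everything else — the standard first-order ``$\ge k$'' quantifier and tracking parameter counts through the recursion — is routine.
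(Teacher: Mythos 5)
Your proof is correct and follows essentially the same route as the paper's: induction on $d$, splitting on whether some parameter $b_0$ cuts the side of $\psi(M)$ containing $c$ down to size at most $2^d-1$ (then shrink $\psi$ and recurse at cost one parameter), and otherwise defining the type outright by the counting formula ``at least $2^d$ elements satisfy $\psi(x)\wedge\varphi(x,y)$''. The only differences are cosmetic: you merge the paper's two unbalanced cases into one via the agreement set $S_b$, and you fold the $d=0$ base case into the counting case.
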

\begin{proof}
By induction on $d$. If $d=0$ then $\left|\psi\left(M\right)\right|\leq1$
thus if $c\in\psi\left(M\right)$ then $\psi\left(M\right)=\left\{ c\right\} $
thus $\exists x\psi\left(x\right)\wedge\varphi\left(x,y\right)$ is
a definition of $\text{tp}_{\varphi}\left(c/B\right)$ with $m$ parameters
from $B$.

Assume the claim holds for $d-1\geq0$ and $\left|\psi\left(M\right)\right|\leq2^{d+1}-1$.
We split the proof into 3 cases:
\begin{enumerate}
\item There is some $a\in\varphi\left(c,B\right)$ such that $\left|\left\{ c'\in\psi\left(M\right)\mid\varphi\left(c',a\right)\right\} \right|\leq2^{d}-1$.
Then $\psi\left(x\right)\wedge\varphi\left(x,a\right)$ is a formula
with at most $m+1$ parameters from $B$ of size $\leq2^{\left(d-1\right)+1}-1$
thus by induction $\text{tp}_{\varphi}\left(c/B\right)$ is definable
with at most $m+1+d-1=m+d$ parameters from $B$.
\item There is some $a\in B\backslash\varphi\left(c,B\right)$ such that
$\left|\left\{ c'\in\psi\left(M\right)\mid\neg\varphi\left(c',a\right)\right\} \right|\leq2^{d}-1$.
Then $\psi\left(x\right)\wedge\neg\varphi\left(x,a\right)$ is a formula
with at most $m+1$ parameters of size $\leq2^{\left(d-1\right)+1}-1$
and again we proceed by induction.
\item If neither 1 nor 2 holds, this means that for any $a\in B$, if $a\in\varphi\left(c,B\right)$
then $\left|\left\{ c'\in\psi\left(M\right)\mid\varphi\left(c',a\right)\right\} \right|\geq2^{d}$
while if $\left|\left\{ c'\in\psi\left(M\right)\mid\varphi\left(c',a\right)\right\} \right|\geq2^{d}$
then $\left|\left\{ c'\in\psi\left(M\right)\mid\neg\varphi\left(c',a\right)\right\} \right|\leq2^{d}-1$
thus $a\notin B\backslash\varphi\left(c,B\right)$ that is $a\in\varphi\left(c,B\right)$.
We conclude that 
\[
\exists x_{1},...,x_{2^{d}}:\left(\bigwedge_{i}\psi\left(x_{i}\right)\right)\wedge\left(\bigwedge_{i<j}x_{i}\neq x_{j}\right)\wedge\left(\bigwedge_{i}\varphi\left(x_{i},y\right)\right)
\]
\\
is a definition of $\text{tp}_{\varphi}\left(c/B\right)$ with at
most $m<m+d$ parameters thus we are done.
\end{enumerate}
\end{proof}

\begin{remark}
If $M$ is finite, there are only finitely many formulas up to equivalence
thus every type is definable. This means that if $\text{tp}\left(c/\emptyset\right)$
has at most $2^{d+1}-1$ realizations then for any $B$ and $\varphi$
we have $\text{tp}_{\varphi}\left(c/B\right)$ is definable with at
most $d$ parameters from $B$. 
\end{remark}

\begin{lemma}
If $M$ is a finite structure in a language $\cL$ containing a binary
relation symbol $<$ such that $\left(M,<\right)$ is a poset, then
every $\emptyset$ type is an antichain.
\end{lemma}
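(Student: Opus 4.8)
The plan is to attach to each element of $M$ a natural‑number invariant, its \emph{depth}, which strictly increases along $<$ yet is determined by the element's type over $\emptyset$; for two $<$-comparable realizations of a common $\emptyset$-type these two facts collide.

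For $n<\omega$ I would introduce the parameter‑free $\cL$-formula
\[
\theta_n(y)\ \equiv\ \exists x_0\cdots x_{n-1}\Bigl(\textstyle\bigwedge_{i<j<n}x_i<x_j\ \wedge\ \bigwedge_{i<n}x_i<y\Bigr),
\]
which says that some strictly increasing $<$-chain of length $n$ lies strictly below $y$. (If the poset relation is not taken to be irreflexive, one replaces $<$ throughout by its strict part $x<y\wedge x\neq y$, which is still quantifier‑free.) Since $M$ is finite it has no infinite chains, so for each $a\in M$ there is a largest $n$ with $M\vDash\theta_n(a)$; call it $d(a)$.

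Two observations then finish the proof. First, each $\theta_n$ is an $\cL$-formula without parameters, so if $\operatorname{tp}(a/\emptyset)=\operatorname{tp}(b/\emptyset)$ then $M\vDash\theta_n(a)\Leftrightarrow M\vDash\theta_n(b)$ for all $n$, whence $d(a)=d(b)$. Second, if $a<b$ then the witnesses $x_0<\cdots<x_{d(a)-1}<a$ for $\theta_{d(a)}(a)$ together with $a$ itself form, by transitivity of $<$, a $<$-chain of length $d(a)+1$ lying below $b$, so $d(b)\geq d(a)+1$. Consequently, if $c$ and $c'$ both realize $\operatorname{tp}(c/\emptyset)$ then neither $c<c'$ nor $c'<c$ can hold. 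As $M$ is finite, the realization set of $\operatorname{tp}(c/\emptyset)$ is exactly $\{a\in M:\operatorname{tp}(a/\emptyset)=\operatorname{tp}(c/\emptyset)\}$, and we have just shown it contains no two $<$-comparable elements, i.e.\ it is an antichain.

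There is no serious obstacle here; the one point to get right is that ``depth'' is tracked uniformly by the parameter‑free family $(\theta_n)_{n<\omega}$, so that it cannot separate two elements of the same $\emptyset$-type. (An alternative route is the general fact that in a finite structure equality of types over $\emptyset$ coincides with lying in a common $\operatorname{Aut}(M)$-orbit, but that needs a diagram argument and, strictly speaking, a finite language; the depth argument uses only the symbol $<$ and finiteness of $M$ — and the latter is genuinely needed, as $(\bbZ,<)$ is a single $\emptyset$-type forming a chain.)
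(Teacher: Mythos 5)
Your argument is correct and is essentially the paper's argument: both attach to each element a parameter-free-definable natural-number invariant that strictly increases along $<$ (using transitivity plus irreflexivity), so that two $<$-comparable elements cannot realize the same $\emptyset$-type. The paper's invariant is the cardinality $\left|\{x\in M\mid x<a\}\right|$ of the down-set rather than your longest-chain depth $d(a)$, but the two choices play identical roles.
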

\begin{proof}
If $a<b$ then $\left|\left\{ x\in M\mid x<a\right\} \right|<\left|\left\{ x\in M\mid x<b\right\} \right|$,
and $\left|\left\{ x\in M\mid x<a\right\} \right|=n$ is definable.
\end{proof}

\begin{corollary}
If $M$ is a finite structure in a language $\cL$ containing a binary
relation symbol $<$ such that $\left(M,<\right)$ is a poset of width
$2^{d+1}-1$, then $M$ has $VCd$.

In other words, a finite order of width $n$ has VCd for $d=\lfloor\log_2(n)\rfloor$.
\end{corollary}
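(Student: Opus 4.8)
The plan is to combine the three preceding ingredients: the counting Lemma bounding the number of parameters needed to define a type when a small definable set is available, the Lemma saying that in a finite poset every $\emptyset$-type is an antichain, and the finiteness of $M$ to turn pointwise definability into genuine uniformity.

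First I would record the form of the counting Lemma that is actually needed: it must handle a whole finite $\Delta(x;y)$ at once, with one shared tuple of parameters. So: if $\psi(x)$ is a formula with $m$ parameters from $B$ and $|\psi(M)| \le 2^{d+1}-1$, then for every $c \in \psi(M)$ the type $\text{tp}_\Delta(c/B)$ is definable with at most $m+d$ parameters from $B$. The induction on $d$ is essentially the one in the paper: at each step, either there are $\varphi \in \Delta$ and $a \in B$ for which replacing $\psi$ by $\psi(x) \wedge \varphi(x,a)$ or by $\psi(x) \wedge \neg\varphi(x,a)$ keeps $c$ in the realization set and shrinks it to size $\le 2^d - 1$ (one parameter spent, $d$ lowered), or no such $\varphi, a$ exist, in which case — applying case 3 of the original proof separately to each $\varphi \in \Delta$ — one checks that $\text{tp}_\varphi(c/B)$ is defined by the single formula stating that $\psi(M)$ contains at least $2^d$ pairwise distinct realizations of $\varphi(x,y)$; this uses only the $m$ parameters already sitting inside $\psi$, so all of $\Delta$ is settled with no extra parameters and the bound $m+d$ survives.

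Then I would fix a finite poset $M$ of width $\le 2^{d+1}-1$ (which is the content of ``width $2^{d+1}-1$'', and also covers width $n$ with $d = \lfloor \log_2 n \rfloor$, since then $n \le 2^{d+1}-1$) and a finite $\Delta(x;y)$ with $|x| = 1$. For each $c \in M$, finiteness of $M$ gives a parameter-free formula $\psi_c(x)$ isolating $\text{tp}(c/\emptyset)$; by the ``$\emptyset$-types are antichains'' Lemma, $\psi_c(M)$ is an antichain, hence $|\psi_c(M)| \le \operatorname{width}(M) \le 2^{d+1}-1$. Feeding this into the upgraded counting Lemma with $m = 0$ shows that for every finite $B \subseteq M^{|y|}$ the type $\text{tp}_\Delta(c/B)$ is defined by some family $\cF = (\phi_\#(y; y_1,\dots,y_d))_{\phi \in \Delta}$ of parameter-free formulas, evaluated at suitable $b_1,\dots,b_d \in B$ (pad the tuple to length $d$). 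Since $M$ is finite, there are only finitely many formulas $\phi_\#$ up to equivalence in $M$, hence only finitely many such families; let $\cF_1,\dots,\cF_n$ enumerate them.

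It remains to see that every $q \in S^\Delta(B)$ is captured. As $M$ is finite it has no proper elementary extension, so $q$ is realized in $M$, say $q = \text{tp}_\Delta(c/B)$; the previous paragraph then supplies $b_1,\dots,b_d \in B$ and $i \le n$ with $\cF_i(y; b_1,\dots,b_d)$ defining $q$. Hence $\Delta$ has UDTFS in $d$ parameters, and since $\Delta$ was an arbitrary finite set of formulas in one variable, $M$ has VCd. The step I expect to be most delicate is the upgrade of the counting Lemma to a finite $\Delta$ with a common parameter tuple — specifically, checking that the non-shrinking terminal case really does dispatch every $\varphi \in \Delta$ at once using only the parameters already present — together with the short but indispensable observation that for finite $M$ there are only finitely many candidate definition schemes, so ``definable with $\le d$ parameters'' upgrades for free to uniform definability.
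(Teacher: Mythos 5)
Your proof is correct and follows the route the paper intends: apply the counting Lemma with $m=0$ to the parameter-free formula isolating $\operatorname{tp}(c/\emptyset)$, whose realization set is an antichain and hence has size at most $2^{d+1}-1$, and then use finiteness of $M$ (finitely many formulas up to equivalence, every type realized) to convert pointwise definability with $\le d$ parameters into uniform definability. You also correctly identify and repair the one point the paper leaves implicit --- that the Lemma must be upgraded from a single $\varphi$ to a finite $\Delta$ with one shared parameter tuple --- and your fix works because the inductive shrinking step spends its parameter once for all of $\Delta$ while the terminal counting case spends none.
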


\begin{remark}
    This corollary can be considered a generalization of $(3)\Rightarrow(1)$ in \cite{aschenbrennervc}, Lemma 5.4.
\end{remark}

\begin{example}
The bound $2^{d+1}-1$ is tight. 

Consider $P=\left\{ \pm1\right\} ^{d+1}$ and let $H_{i,\varepsilon}=\left\{ a\in P\mid a_{i}=\varepsilon\right\} $.
Let $\mathcal{H}=\left\{ H_{i,\varepsilon}\mid i\leq d,\varepsilon=\pm1\right\} $
and consider $M=\left(P\cup\mathcal{H},\in\right)$ as a poset. Then
there are exactly 2 types over $\emptyset$ ---- the type of some
$a\in P$ (of size $2^{d+1}$) and the type of some $H_{i,\varepsilon}$
(of size $2\left(d+1\right)$). 

For any $i\leq d$, the mapping \[\sigma_{i}\left(x\right)=\begin{cases}
\left(x_{0},...,x_{i-1},-x_{i},x_{i+1},...,x_{d}\right) & x\in P\\
x & x=H_{j,\varepsilon},j\neq i\\
H_{i,-\varepsilon} & x=H_{i,\varepsilon}
\end{cases}\] is an automorphism of $M$ fixing no element of $P$ and all but
two $H_{j,\varepsilon}$.

Let $B=\mathcal{H}$, and take $c$ the constant sequence $1$ in
$P$. Then for any $B'\subseteq B$ of size at most $d$, there must
be some $i$ such that $\left\{ H_{i,1},H_{i,-1}\right\} \cap B'=\emptyset$
thus $\sigma_{i}$ fixes $B'$ but not $\text{tp}_{x\in y}\left(c/B\right)$,
so $\text{tp}_{x\in y}\left(c/B\right)$ cannot be definable over
$B'$.
\end{example}

\begin{remark}
    In particular, the above example shows that there is no $d < \omega$ such that any (finite)
    partial order of finite width has VCd.
\end{remark}

\bibliographystyle{alpha}
\bibliography{references}

\newcommand{\etalchar}[1]{$^{#1}$}
\begin{thebibliography}{ADH{\etalchar{+}}16}

\bibitem[ADH{\etalchar{+}}16]{aschenbrennervc}
Matthias Aschenbrenner, Alf Dolich, Deidre Haskell, Dugald Macpherson, and Sergei Starchenko.
\newblock Vapnik-chervonenkis density in some theories without the independence property, i.
\newblock {\em Transactions of the American Mathematical Society}, 368(8):5889--5949, 2016.

\end{thebibliography}

\end{document}